\theoremstyle{plain}
\newtheorem{theorem}{Theorem}[section]
\newtheorem{conjecture}[theorem]{Conjecture}
\newtheorem{corollary}[theorem]{Corollary}
\theoremstyle{definition}
\newtheorem{question}[theorem]{Question}
\renewcommand{\bold}[1]{\medskip \noindent {\bf #1 }\nopagebreak}
\numberwithin{equation}{section}
\def\bC{\mathbb{C}}
\def\bQ{\mathbb{Q}}
\def\bR{\mathbb{R}}
\def\bZ{\mathbb{Z}}
\def\cH{\mathcal{H}}
\def\cM{\mathcal{M}}
\def\cN{\mathcal{N}}
\def\cQ{\mathcal{Q}}
\def\bk{\mathbf{k}}
\def\GL{\operatorname{GL}}
\def\Re{\operatorname{Re}}
\def\Im{\operatorname{Im}}
\def\phi{\varphi}
\title{A new orbit closure in genus 8?}
\author{Vincent Delecroix, Julian R\"uth}
\date{}
\begin{document}

 \begin{abstract}
We provide numerical evidence that the orbit closure of the unfolding of the
$(3,4,13)$-triangle is a previously unknown 4-dimensional variety.
 \end{abstract}

\maketitle

\setcounter{tocdepth}{1}
\tableofcontents

\section{Introduction}

Given a polygon $P$ in $\bR^2$, we consider the associated billiard flow on
its unit tangent bundle $T^1 P$. When the polygon is \emph{rational}, that
is all its angles are rational multiple of $\pi$, then the 3-dimensional
phase space $T^1 P$ decomposes into a one-parameter family of invariant
surfaces that are the directional flows associated to the vector
fields $\omega = e^{i \theta}$ of an Abelian differential $\omega$ on
a compact Riemann surface $X$. See the survey \cite{MT} for this construction, which is
called the \emph{unfolding} of the billiard table $P$.

Let $\cM_{g,n}$ denote the moduli space of compact Riemann surface with
$n$ distinct marked points. Given a tuple of non-negative numbers $\kappa = (\kappa_1, \kappa_2, \ldots, \kappa_n)$
which sum to $2g - 2$,
we define the \emph{stratum} $\cH_g(\kappa)$
as the moduli space of tuples $(X, p_1, \ldots, p_n, \omega)$ such that $(X, p_1, \ldots, p_n) \in \cM_{g,n}$
and $\omega$ is a non-zero Abelian differential on $X$ with zeros of order $\kappa_i$
at $p_i$. The condition on the sum of $\kappa$ implies that $\omega$ does not vanish on the
complement of $\{p_1, \ldots, p_n\}$ in $X$.
The \emph{local period coordinates map} which, to $(X, p_1, \ldots, p_n, \omega)$ associates
$$[\omega] \in H^1(X, \{p_1, \ldots, p_n\}; \bC)\simeq \bC^{2g+n-1}\simeq (\bR^2)^{2g+n-1},$$ provides each stratum with an atlas of local charts with transition maps in $\GL_{2g+n-1}(\bZ)$.

Each stratum $\cH_g(\kappa)$ admits a $\GL_2^+(\bR)$-action via the action on
periods.
This $\GL_2^+(\bR)$-action acts as a
renormalization for the directional flows and many results about the
combinatorics and the dynamics of billiards in rational polygons are
obtained by studying properties of this action. See the
surveys~\cite{Esur,Fsur,HSsur,Masur,Wsurvey}.

We call a closed, irreducible $\GL_2^+(\bR)$-invariant subvariety of a stratum of Abelian differentials
an \emph{invariant subvariety}. By~\cite{EM,EMM,Fi1}, any closed $\GL_2^+(\bR)$-invariant
subset of a stratum is finite union of invariant subvarieties. Moreover, \cite{EM, EMM} gives that the image of any invariant $\cN$ under the local period coordinates map is locally identified with a finite union of linear subspaces of $H^1(X, \{p_1, \ldots, p_n\}; \bC)$ of the form
$V \oplus \sqrt{-1} V$ for some $V \subset H^1(X, \{p_1, \ldots, p_n\}; \bR)$; at typical points of $\cN$, only a single subspace is required.

Recall the following definitions:  the \emph{rank} of $\cN$ is half
the dimension of the image of $V$ in absolute cohomology $H^1(X; \bR)$; the \emph{rel} of $\cN$ is its (complex) dimension minus two times its rank; and the \emph{field of definition}\footnote{This is not the same as the field of definition of $\cN$ in the sense of algebraic geometry.} $\bk(\cN)$ is the smallest field over which  the linear subspace $V$ is defined \cite{Wfield}. Here it is helpful to recall that the image of $V$ in $H^1(X, \bR)$ is symplectic \cite{AEM}, and that real cohomology has a canonical integral structure.

Similarly to the Abelian case,
we define strata of quadratic differentials $\cQ_g(\mu)$.
By a
standard  construction, a quadratic differential admits a double cover
on which the quadratic differential lifts to the square of an Abelian
differential. We denote by $\widetilde{\cQ}_g(\mu)$ the locus of all such double
covers\footnote{Note that taking the double cover of a quadratic differential $(X, Q)$ does not specify the points $\tilde{x}_1$, \ldots, $\tilde{x}_{n'}$ in the cover. In our situation we only consider coverings of Abelian differentials in the stratum $\cQ_4(11,1)$. Each double cover has two zeros (of order $12$ and $2$) and and we label them in the same order as in the base surface.} of elements of $\cQ_g(\mu)$.

We now state our main result. By default, all dimensions are over $\bC$.
\begin{conjecture}\label{conj:main}
Let $\cN \subset \widetilde{\cQ}_4(11,1) \subset \cH_8(12,2)$ $\cN$ be the $\GL_2(\bR)$-orbit closure of the unfolding of the $(3,4,13)$-triangle.
Then $\cN$ is 4-dimensional and has field of definition $\bk(\cN)=\bQ[\sqrt{5}]$.
\end{conjecture}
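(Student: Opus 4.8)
The first step is to realize the unfolding of the $(3,4,13)$-triangle as an explicit translation surface. Since $3+4+13=20$, one assembles the surface from the appropriate reflected copies of the triangle, obtaining a holomorphic $1$-form $(X,\omega)$; computing the cone angles at the images of the three vertices shows $(X,\omega)\in\cH_8(12,2)$, and since the unfolding of a triangle carries an extra involution under which $\omega$ is anti-invariant, $\omega^2$ descends to a quadratic differential on the genus-$4$ quotient with zeros of orders $11$ and $1$, placing $(X,\omega)$ on the canonical double-cover locus $\widetilde{\cQ}_4(11,1)$ with the order-$12$ and order-$2$ zeros corresponding to the order-$11$ and order-$1$ zeros as prescribed in the footnote. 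Because the periods of $\omega$ lie in a fixed number field (determined by the angles), one should carry out all subsequent linear algebra in exact arithmetic rather than floating point, so that the linear subspaces one computes are exact statements and not numerical artifacts.

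\textbf{A lower bound for the tangent space.} Next I would exhibit tangent vectors to $\cN$ at $(X,\omega)$, or rather at a nearby smooth point of the orbit. Two complex directions are tautological, coming from the $\GL_2^+(\bR)$-action on periods. For the remainder one searches for periodic (ideally completely periodic) directions, decomposes the surface into maximal cylinders in each such direction, and forms the associated cylinder-shear and cylinder-stretch deformations; by the cylinder deformation theorems of Eskin--Mirzakhani--Mohammadi and Wright, these deformations, appropriately combined over an equivalence class of parallel cylinders, stay inside $\cN$. Iterating the loop ``decompose, deform, look for cylinders that only surface after deforming, add their deformations'' produces a stable rational subspace $W\subseteq H^1(X,\{p_1,p_2\};\bC)$ with $W\subseteq T\cN$, whence $\dim\cN\ge\dim W$. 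The substance of Conjecture~\ref{conj:main} is that this procedure terminates with $\dim_{\bC}W=4$, that the image of $W$ in $H^1(X;\bR)$ is symplectic (which reads off the rank and rel of $\cN$), that the equations defining $W$ require exactly the field $\bQ[\sqrt5]$, and that no previously known invariant subvariety of $\cH_8(12,2)$ has these invariants.

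\textbf{The main obstacle: the upper bound.} Certifying that $\dim\cN\le\dim W$ --- i.e.\ that $W$ is all of $T\cN$ --- is the hard step, and there is at present no algorithm that does this, which is precisely why the statement is a conjecture. What one can do is accumulate evidence. Apply long random words in Veech-group and $\mathrm{SL}_2(\bZ)$-type generators to $(X,\omega)$ and check that every resulting surface continues to satisfy the defining equations of $W$; since by \cite{EM,EMM} $\cN$ is locally cut out by subspaces of the form $V\oplus\sqrt{-1}V$, any surface in the orbit violating those equations would force $W$ to be enlarged. Check also that the cylinder-deformation search has genuinely stabilized and that $W$ is preserved by every cylinder deformation one can exhibit on the sampled surfaces. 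Finally, to justify calling $\cN$ new, verify that it is not a connected component of a stratum, not produced by a torus or lower-genus covering construction, not a Prym eigenform locus, and not one of the finitely many catalogued sporadic higher-rank examples; separating this candidate from the other invariant subvarieties defined over $\bQ[\sqrt5]$ is the most delicate part of the classification claim.
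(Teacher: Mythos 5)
Your proposal matches the paper's approach: the paper likewise establishes only the lower bound (Theorem~\ref{thm:lowerBound}) by iterating the cylinder deformation theorem in exact arithmetic with \texttt{flatsurf} (including the refinement of deforming along the already-found subspace before searching for new cylinders), and leaves the upper bound --- certifying that the computed $4$-dimensional subspace is the full tangent space --- open, which is exactly why the statement is a conjecture rather than a theorem. Your identification of the upper bound as the genuine obstruction, and of the field-of-definition claim as only partially resolvable (the paper too only narrows it to $\bQ$ or $\bQ[\sqrt{5}]$ via the holonomy field), is accurate.
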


As a partial result, we provide a computer assisted proof of the lower bound.
\begin{theorem} \label{thm:lowerBound}
The invariant subvariety $\cN$ is at least 4-dimensional and is defined over either $\bQ[\sqrt{5}]$ or $\bQ$. Moreover, it contains all unfoldings of quadrilaterals with angles $\{ \frac{2\pi}{10}, \frac{2\pi}{10}, \frac{3\pi}{10}, \frac{13\pi}{10}\}.$
\end{theorem}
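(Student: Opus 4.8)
The plan is to exhibit, at one explicitly constructed point $(X,\omega)\in\cN$, four $\bR$-linearly independent vectors lying in a common linear subspace $V$ of the form appearing in the Eskin--Mirzakhani structure theorem \cite{EM,EMM} at $(X,\omega)$, so that $\dim_\bC\cN\ge\dim_\bR V\ge 4$, and then to bound the field $\bk(\cN)$ of \cite{Wfield}. We first fix a model: unfolding the $(3,4,13)$-triangle by the dihedral group of order $40$ produces an explicit polygon with side identifications over $\bQ(\zeta_{40})$, hence a translation surface $(X,\omega)$ with one zero of order $12$ and one of order $2$, so $(X,\omega)\in\cH_8(12,2)$. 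We then locate the order-two automorphism $\tau$ exhibiting $(X,\omega)$ as the canonical double cover of a quadratic differential in $\cQ_4(11,1)$; this gives $(X,\omega)\in\widetilde{\cQ}_4(11,1)$ together with the $\tau$-eigenspace splitting $H^1(X,\{p_1,p_2\};\bC)=H^+\oplus H^-$, where $T\widetilde{\cQ}_4(11,1)=H^-$ has complex dimension $8$, $[\omega]\in H^-$, and therefore $T_{(X,\omega)}\cN\subseteq H^-$ throughout. All of this, and every computation below, is carried out with exact arithmetic in the degree-$8$ field $\bQ(\cos\tfrac{\pi}{20})$ and its cyclotomic closure, so that each rank and independence assertion is a finite exact linear-algebra computation in $H^1(X,\{p_1,p_2\};\bQ(\zeta_{40}))\cong\bQ(\zeta_{40})^{17}$ with its natural integral structure.

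The $\GL_2^+(\bR)$-orbit already contributes $\Re[\omega]$ and $\Im[\omega]$ to $V$, so two further independent vectors are needed. Here we use a reflection identity: reflecting the $(3,4,13)$-triangle across the side opposite its $\tfrac{\pi}{5}$-vertex doubles the two other angles and yields a quadrilateral with angles $\big(\tfrac{3\pi}{10},\tfrac{\pi}{5},\tfrac{13\pi}{10},\tfrac{\pi}{5}\big)$ --- the mirror-symmetric member of the family in the statement --- whose billiard flow is conjugate to that of the triangle, so that, up to $\GL_2^+(\bR)$, $(X,\omega)$ is the unfolding of this quadrilateral. The quadrilaterals with these four angles form a one-parameter family up to similarity, and the closing-up condition on side lengths is linear; hence the deformation of the symmetric member within the family is a path in $\cH_8(12,2)$ that is linear in period coordinates, and since invariant subvarieties are locally linear in period coordinates, the whole family of unfoldings lies in $\cN$ precisely when the velocity $v_1\in H^1(X,\{p_1,p_2\};\bR)$ of this deformation lies in $T_{(X,\omega)}\cN$.

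To certify $v_1\in T_{(X,\omega)}\cN$, and to obtain a second independent $v_2\in T_{(X,\omega)}\cN$ from the analogous quadrilateral family attached to a different side of the triangle, we pass to completely periodic directions adapted to these quadrilaterals, compute the horizontal cylinder decompositions and the core curves $\gamma_i\in H_1(X\setminus\{p_1,p_2\};\bZ)$ exactly, and realize each family velocity --- after a suitable element of $\GL_2^+(\bR)$ --- as the shear of a collection $\mathcal C$ of cylinders. One then certifies that $\mathcal C$ is an equivalence class for the relation of being $\cN$-parallel, using the already-known $\Re[\omega],\Im[\omega]\in T\cN$, the inclusion $T\cN\subseteq H^-$, and the action of $\tau$ on the cylinders, so that Wright's Cylinder Deformation Theorem places $v_1$ (and likewise $v_2$) in $T_{(X,\omega)}\cN$. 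A final exact computation checks that $\Re[\omega],\Im[\omega],v_1,v_2$ span a $4$-dimensional subspace inside a common linear piece $V$ at $(X,\omega)$; this gives $\dim_\bC\cN\ge 4$, and, since $v_1$ and $v_2$ are the family velocities, it also gives the asserted containment of the quadrilateral unfoldings.

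For the field of definition, note that this $4$-dimensional subspace is spanned by $\Re[\omega]$, $\Im[\omega]$ and two $\bZ$-rational cylinder-shear classes. A computation over the number field --- determining the smallest linear subspace that is compatible with the $\cN$-parallel data found above and with $[\omega]$ lying in its complexification --- shows that this subspace, and hence $V$, is defined over $\bQ[\sqrt5]$; thus $\bk(\cN)\subseteq\bQ[\sqrt5]$, and since $\bQ[\sqrt5]/\bQ$ has no proper intermediate field, $\bk(\cN)\in\{\bQ,\bQ[\sqrt5]\}$. The main obstacle is the certification of the $\cN$-parallel classes: $\cN$-parallelism is defined through $T\cN$ itself, which is precisely what is being bounded, so the bootstrap must be arranged so that the portion of $T\cN$ available at each stage already separates the cylinders that have to be separated before Wright's theorem is invoked; making this effective, and organizing the attendant exact linear algebra over a degree-$8$ field in a $17$-dimensional relative cohomology with its integral structure so that it terminates with a genuine certificate rather than a heuristic, is where the computer assistance is essential.
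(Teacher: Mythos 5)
Your overall strategy is the paper's: Theorem~\ref{thm:lowerBound} is deduced there as an immediate consequence of Theorem~\ref{thm:equations}, which is established exactly as you describe --- exact arithmetic over the relevant number field, cylinder decompositions in periodic directions, and Wright's cylinder deformation theorem applied to certified unions of $\cN$-parallel equivalence classes, bootstrapping from the tautological plane $V_0=\bR\,\Re(\omega)\oplus\bR\,\Im(\omega)$ (this is the algorithm of Section~\ref{sec:flatsurf}, run in \texttt{flatsurf}). The dimension bound and the containment of the quadrilateral unfoldings are obtained the same way in both arguments, so on these points the proposal matches the paper.

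The one step that does not go through as written is the field of definition. You argue that the certified $4$-dimensional subspace $W\subseteq V$ is defined over $\bQ[\sqrt{5}]$ ``and hence $V$'' is; but $\bk(\cN)$ is by definition the field of definition of the \emph{full} tangent space $V$, and nothing prevents a proper subspace of $V$ from being defined over a smaller field than $V$ itself --- this is precisely the situation you would be in if Conjecture~\ref{conj:main} failed and $\dim\cN>4$, which is exactly the possibility the theorem must leave open. The paper's route avoids this: by \cite{Wfield} the field of definition is contained in the holonomy field of \emph{every} surface in $\cN$, and the $4$-dimensional family of Theorem~\ref{thm:equations} exhibits surfaces whose periods, hence holonomy field, lie in $\bQ[\sqrt{5}]$; since $[\bQ[\sqrt{5}]:\bQ]=2$ the only possibilities are $\bQ$ and $\bQ[\sqrt{5}]$. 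Your closing degree-$2$ observation is correct, but to get $\bk(\cN)\subseteq\bQ[\sqrt{5}]$ in the first place you need the holonomy-field containment (a statement about individual surfaces in $\cN$), not a statement about a subspace of $V$. A smaller point: certifying that \emph{all} unfoldings of the $(2,2,3,13)$-quadrilaterals lie in $\cN$ requires globalizing your local tangency statement along the connected family (an open-closed argument using that $\cN$ is closed and locally linear); this is routine but should be said.
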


Rational billiards whose unfoldings have non-dense $\GL_2^+(\bR)$-orbit closures are rare. We
refer the reader to Section~\ref{ssec:billiard:orbit:closures} for a survey of the
literature and experimental results about triangles and quadrilaterals, emphasizing for now that Conjecture~\ref{conj:main} adds to recent discoveries of the same flavor by McMullen-Mukamel-Wright and Eskin-McMullen-Mukamel-Wright \cite{MMW, EMMW}.

Let us point out a few properties of $\cN$. Since $\widetilde\cQ_4(11,1)$ has no rel, it follows that $\cN$ has no rel. Hence, if Conjecture~\ref{conj:main} holds, the fact that $\cN$ is 4-dimensional implies that it has rank 2. We also note that \cite[Remark 3.4]{ApisaWright} implies that unfoldings of typical $( \frac{2\pi}{10}, \frac{2\pi}{10}, \frac{3\pi}{10}, \frac{13\pi}{10})$ quadrilaterals do not cover lower genus translation surfaces, so $\cN$ is primitive. Finally, we point out as follows that $\cN$ is the orbit closure of many unfoldings.

\begin{corollary} \label{C:3.4.13}
Assuming Conjecture~\ref{conj:main}, for all but finitely many similarity classes of quadrilaterals with angles $\{ \frac{2\pi}{10}, \frac{2\pi}{10}, \frac{3\pi}{10}, \frac{13\pi}{10}\}$, the orbit closure of the unfolding is equal to $\cN$.
\end{corollary}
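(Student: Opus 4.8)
The plan is to combine two facts: assuming Conjecture~\ref{conj:main}, every proper invariant subvariety of $\cN$ is a Teichm\"uller curve; and the algebraic family of these quadrilateral unfoldings, not being contained in any proper invariant subvariety, meets only finitely many such curves. First I would record the reduction. By Theorem~\ref{thm:lowerBound} every unfolding $X_Q$ lies in $\cN$, so, $\cN$ being closed and $\GL_2^+(\bR)$-invariant, the orbit closure $\cN_Q:=\overline{\GL_2^+(\bR)\cdot X_Q}$ is an invariant subvariety contained in $\cN$, and it suffices to show $\cN_Q=\cN$ for all but finitely many similarity classes of $Q$ (the space of such classes is one real-dimensional). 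Assuming Conjecture~\ref{conj:main}, $\cN$ has rank $2$ and no rel; any invariant subvariety $\cN'\subseteq\cN$ then also has no rel, because on passing to a subvariety the kernel, inside the tangent space, of the natural map from relative to absolute cohomology can only shrink. Hence $\dim_{\bC}\cN'=2\,\mathrm{rank}(\cN')$ is even, so a proper $\cN'$ has rank $1$ and complex dimension $2$; its $\GL_2^+(\bR)$-orbits are then open in it (each is four real-dimensional, Veech groups being discrete), and the standard boundary-dimension argument forces $\cN'$ to be a single closed orbit, i.e.\ a Teichm\"uller curve. So $\cN_Q\subsetneq\cN$ exactly when $X_Q$ is a Veech surface, and everything reduces to bounding the number of similarity classes that unfold to Veech surfaces.

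Next I would set up the geometry of the family. The quadrilaterals form a one-real-parameter family of similarity classes, and, together with the $\bC^{*}$ of rescalings and rotations, their unfoldings sweep out a locus $\mathcal C\subset\cN$ whose period coordinates depend algebraically on the shape parameter. Since $\GL_2^+(\bR)$ commutes with translation automorphisms, the dihedral symmetry group $D_{10}$ carried by every $X_Q$ is $\GL_2^+(\bR)$-invariant, so $\mathcal C$ sits inside the $\GL_2^+(\bR)$-invariant, period-linear locus $\mathcal A\subseteq\cN$ of $D_{10}$-symmetric surfaces --- a finite cover of a small stratum of quadratic differentials on $\bP^1$ determined by the four vertex angles. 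I would then invoke the lower-bound computation behind Theorem~\ref{thm:lowerBound}, which shows that the orbit closure of the union of all the $X_Q$ is already four-dimensional and hence equals the irreducible variety $\cN$; in particular neither $\mathcal C$ nor $\mathcal A$ is contained in a proper invariant subvariety of $\cN$.

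Finally, for a fixed Teichm\"uller curve $T\subseteq\cN$, the structure theorem of \cite{EM,EMM} cuts $T$ out by real-linear equations in period coordinates, so $\{Q:X_Q\in T\}$ is the common zero set of finitely many real-analytic functions on the one-dimensional shape space, hence either a whole connected component --- excluded by the preceding step --- or finite. The main obstacle, where I expect the real work to lie, is to pass from ``finite for each $T$'' to ``finite in total'': one must bound the number of Teichm\"uller curves $T\subseteq\cN$ that meet $\mathcal C$. I would attack this through $\mathcal A$, using that any such $T$ has rank $1$, no rel, and consists of $D_{10}$-symmetric surfaces and is therefore a very rigid object --- so that either an explicit classification of the Teichm\"uller curves inside $\mathcal A$, or a uniform complexity bound for them together with a known finiteness theorem for Teichm\"uller curves, should show there are only finitely many. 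Granting this, $\bigcup_{T}\{Q:X_Q\in T\}$ is finite, and this is the exceptional set.
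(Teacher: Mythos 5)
Your first paragraph is essentially the paper's first step: assuming the conjecture, $\cN$ is rank $2$ with no rel, every proper invariant subvariety of $\cN$ inherits rel $0$, hence has rank $1$ and dimension $2$ and is a closed orbit, so every orbit in $\cN$ is closed or dense (the paper attributes this dichotomy to \cite{AEM}). The genuine gap is in how you finish. You correctly identify that the crux is to bound the \emph{number} of Teichm\"uller curves in $\cN$ meeting the family of unfoldings, but you then only gesture at a strategy (``an explicit classification \ldots or a uniform complexity bound \ldots together with a known finiteness theorem'') and proceed by ``Granting this''. That missing ingredient is exactly the theorem the paper invokes: by Eskin--Filip--Wright \cite{EFW}, an invariant subvariety of rank at least $2$ contains only finitely many closed $\GL_2^+(\bR)$-orbits. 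With that in hand, no analysis of the symmetric locus $\mathcal{A}$, of the individual curves $T$, or of the algebraicity of the family is needed; the union of all closed orbits in $\cN$ is a finite union of Teichm\"uller curves, and the exceptional similarity classes are exactly those whose unfoldings land there. Without \cite{EFW} (or an equivalent finiteness statement), your argument does not close: real-analyticity gives you at best ``finite (or discrete) for each fixed $T$'', and there is no a priori bound on how many $T$ occur.

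Two smaller points. First, your ``finite for each $T$'' step is itself slightly too quick: a discrete zero set of a real-analytic function on a non-compact one-dimensional shape space need not be finite, so even that reduction needs more care (the paper sidesteps this entirely, noting only that countability of the exceptional set already follows from remarks in \cite{MirWri2}). Second, your appeal to the $D_{10}$-symmetric locus $\mathcal{A}$ and to a classification of Teichm\"uller curves inside it is not a step the paper takes and, as written, is not a proof; it is the place where the real content would have to go, and it is precisely what \cite{EFW} replaces.
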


\begin{proof}[Proof of Corollary.]
It follows from~\cite{AEM} that the orbit of any surface in $\cN$ is closed or dense. It follows from~\cite{EFW} that $\cN$ has only finitely many closed orbits. This gives the first statement. (The same result with countably rather than finitely many exceptions follows from remarks in~\cite{MirWri2}.)
\end{proof}

Section~\ref{sec:explicit} gives examples of surfaces in $\cN$, presented via polygons in a particularly nice way, together with the linear equations locally defining $\cN$ at these surfaces. It contains the proof of Theorem~\ref{thm:lowerBound}. Section~\ref{sec:flatsurf} discusses the software suite \texttt{flatsurf} that is used throughout the proof in this article.

\bold{Acknowledgments.} We thank Alex Eskin for assisting in the development of the program discussed in Section~\ref{sec:flatsurf}, and for helpful conversations.
We thank Alex Wright for continuous support and for many insights on how one may approach Conjecture~\ref{conj:main}.

Julian R\"uth was supported by the Simons Foundation Simons Investigator grant of Alex Eskin. Alex Wright was partially supported by  NSF Grant DMS 1856155 and a Sloan Research Fellowship.

\section{Explicit surfaces in $\cN$}\label{sec:explicit}
By construction, there is an invariant subvariety $\overline{\cN}\subset \cQ_4(11,1)$ such that $\cN$ is the locus of double covers of quadratic differentials in $\overline{\cN}$. In this section we give especially nice polygonal presentations for some surfaces in $\overline{\cN}$ and provide a 4-dimensional linear subspace of the tangent space to $\cQ_4(11,1)$ that $\cN$ must contain. This linear subspace is described by equations near these particularly nice surfaces.

In order to describe linear equations in relative cohomology one needs to pick
a specific surface in $\overline{\cN}$ and a specific homology basis. Our choice
is shown in Figure~\ref{fig:layout}.
\begin{figure}[h]
    \centering
    {
      \def\svgwidth{\columnwidth}
      \fontfamily{lmss}\fontsize{5}{5}\selectfont
      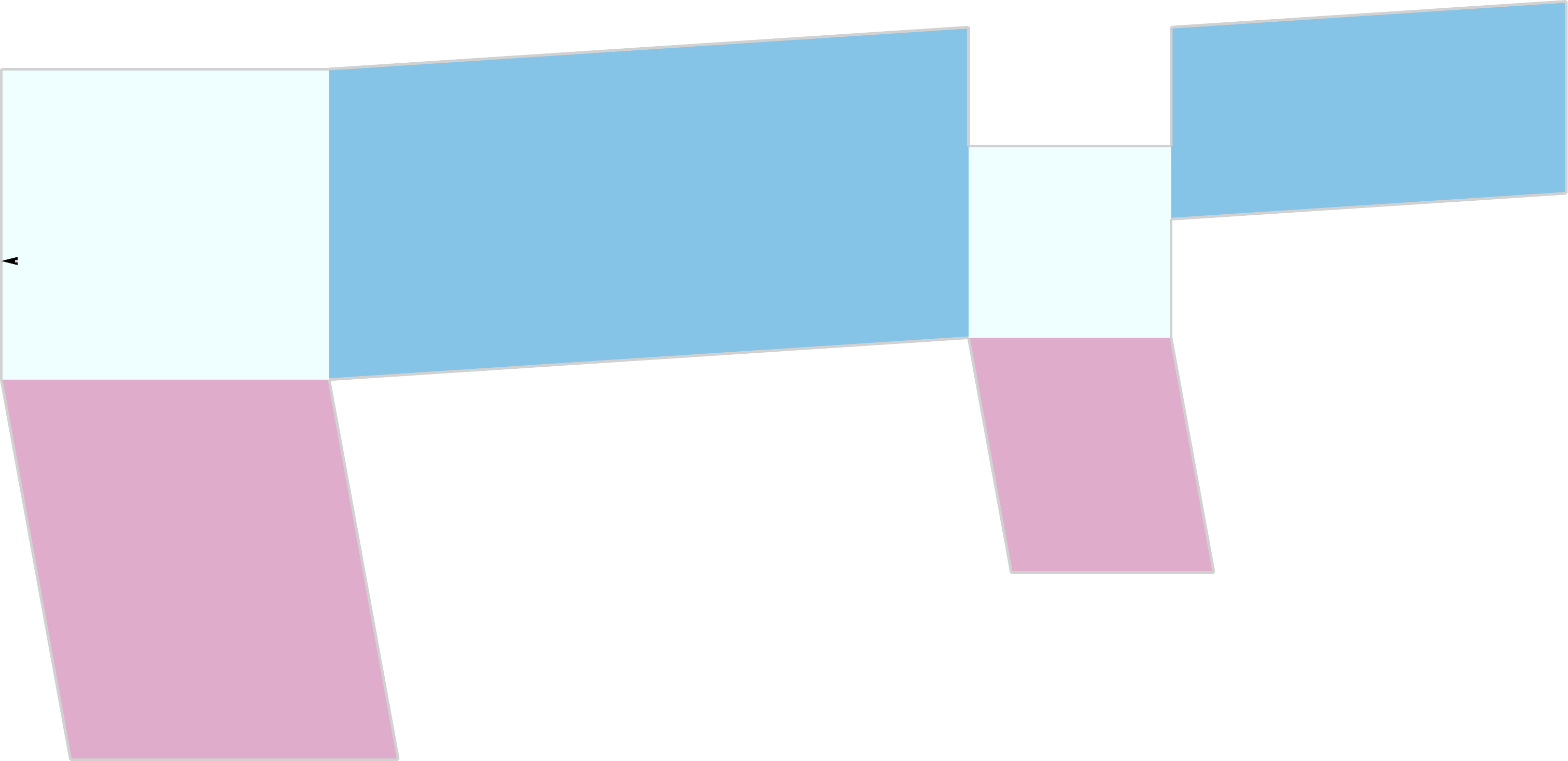
    }
\caption{A quadratic differential in $\overline{\cN} \subset \cQ_4(11,1)$ obtained from the gluing of 6 polygons. The sides $B$, $C$, $D$, $S$, $T$, $U$ and $V$ are identified by translations $z \mapsto z + c$ while the sides $A$ and $E$ are identified by "half translations" $z \mapsto -z + c$, respectively.}
\label{fig:layout}
\end{figure}

The complex dimension of $\cQ_4(11, 1)$ is 8, so our (local) linear subvariety contained in $\overline{\cN}$ is locally described by 4 equations.
On the surface illustrated, these equations become particularly elegant. In the theorem below, we write the equations directly
in the homology of the surface that supports the quadratic differential even though these equations live in the
double cover. The integration of the square root of a quadratic differential is only well defined up to sign,
but Figure~\ref{fig:layout} makes it clear which sign to choose.
\begin{theorem}
\label{thm:equations}
The invariant subvariety contains the following (local) linear subvariety.
Given the relative homology basis $\{A, B, C, D, E, S, T, U, V\}$ for the surface
depicted in Figure~\ref{fig:layout}, a set of defining equations for the linear subvariety
is given by
\begin{align*}
\int_A \sqrt{q} = \pm \phi \cdot \int_B \sqrt{q} \qquad & \qquad \int_C \sqrt{q} = \pm \phi \cdot \int_D \sqrt{q} \\
\int_S \sqrt{q} = \pm \phi \cdot \int_T \sqrt{q} \qquad & \qquad \int_U \sqrt{q} = \pm \phi \cdot \int_V \sqrt{q}
\end{align*}
where $\phi = \frac{1 + \sqrt{5}}{2}$.
\end{theorem}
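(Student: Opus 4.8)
The plan is to exhibit, by exact computation over $\bQ[\sqrt5]$, four linearly independent tangent vectors to $\overline\cN$ at the surface $S_0$ of Figure~\ref{fig:layout} that satisfy the four displayed equations; by the local linearity of invariant subvarieties in period coordinates (\cite{EM,EMM}), the local affine subspace through $S_0$ directed by these vectors is then contained in $\overline\cN$, which is the asserted (local) linear subvariety. First I would fix explicit coordinates in $\bQ[\sqrt5]$ for the six polygons of Figure~\ref{fig:layout} and for the relative homology basis $\{A,B,C,D,E,S,T,U,V\}$ of the underlying surface, check that the gluings produce a quadratic differential $q_0 \in \cQ_4(11,1)$, and verify that $S_0$ lies in $\overline\cN$. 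This last point is the only place where the hypothesis that $\cN$ is the orbit closure of the $(3,4,13)$-triangle enters: one produces an explicit finite chain of $\GL_2^+(\bR)$-moves and cylinder shears joining the unfolding of the triangle to $S_0$, a finite, if delicate, verification carried out with \texttt{flatsurf}.

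Next I would locate several periodic directions on $S_0$, compute their cylinder decompositions exactly, and apply Wright's Cylinder Deformation Theorem. Since the deformation using \emph{all} parallel cylinders in a given direction only reproduces tangent directions already coming from $\GL_2^+(\bR)$, the crux is to certify that certain \emph{proper} sub-collections of parallel cylinders are unions of $\cN$-equivalence classes; I would do this by performing the corresponding shears and stretches explicitly and recognizing the resulting surfaces as $\GL_2^+(\bR)$-images of $S_0$ (or of other surfaces already known to lie in $\overline\cN$), so that the entire deformation path lies in $\overline\cN$ and its derivative at the identity is a genuine tangent vector. Writing the tangent vectors so obtained in the basis dual to $\{A,\dots,V\}$, a direct linear-algebra computation shows that they span a $4$-dimensional subspace satisfying $\int_A \sqrt q = \pm \phi \int_B \sqrt q$ and its three companions, and that these four equations are independent on the $8$-dimensional tangent space of $\cQ_4(11,1)$; this yields the equations of Theorem~\ref{thm:equations} together with the bound $\dim_\bC \cN \ge 4$. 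The field of definition then follows for free: the tangent vectors are defined over $\bQ[\sqrt5]$, so $\bk(\cN) \subseteq \bQ[\sqrt5]$, and since $\bQ[\sqrt5]/\bQ$ has no proper intermediate field, $\bk(\cN)$ is $\bQ$ or $\bQ[\sqrt5]$.

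For the quadrilateral statement I would observe that the unfoldings of quadrilaterals with angles $\{\tfrac{2\pi}{10},\tfrac{2\pi}{10},\tfrac{3\pi}{10},\tfrac{13\pi}{10}\}$ form a one-parameter family through $S_0$, and that moving along this family is effected by exactly the cylinder deformations certified above (varying the side lengths of the quadrilateral reshapes precisely those cylinders). As those deformations preserve $\overline\cN$, every such unfolding lies in $\overline\cN$, hence its double cover lies in $\cN$; together with the dimension bound this proves Theorem~\ref{thm:lowerBound}.

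The step I expect to be the main obstacle is the certification in the second paragraph: matching a deformed polygonal surface with a $\GL_2^+(\bR)$-translate of $S_0$ requires solving for the transformation \emph{symbolically} in the number field, and every claim of the form ``these saddle connections are parallel'' or ``these two cohomology classes agree'' must be decided exactly rather than numerically. Turning all of these into terminating exact computations over $\bQ[\sqrt5]$ is precisely the role of the \texttt{flatsurf} toolchain of Section~\ref{sec:flatsurf}, and the proof amounts to the bookkeeping that reduces Theorem~\ref{thm:equations} to a finite list of such checks.
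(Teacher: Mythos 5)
Your overall architecture is the same as the paper's: a computer-assisted, exact computation over $\bQ[\sqrt5]$ that produces four tangent vectors via the Cylinder Deformation Theorem and then expresses the resulting $4$-dimensional subspace in the homology basis of Figure~\ref{fig:layout}. (The paper runs the computation at the unfolding of the $(3,4,13)$-triangle and then transports the base point along the discovered subspace to reach the surface of Figure~\ref{fig:layout}, whereas you work at that surface from the start; that difference is cosmetic.)

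However, the step you yourself identify as the crux is described in a way that cannot work. To certify that the twist in a \emph{proper} sub-collection $\mathcal{C}'$ of the parallel cylinders is a genuine tangent vector, you propose to perform the shear and ``recognize the resulting surfaces as $\GL_2^+(\bR)$-images of $S_0$ (or of other surfaces already known to lie in $\overline{\cN}$).'' This is self-defeating: if the sheared surfaces really are $\GL_2^+(\bR)$-translates of $S_0$ for all $t$, the path lies in a single orbit and its derivative is already in the tautological plane $V_0$, so you learn nothing new; and the deformations that \emph{do} produce new tangent vectors leave the orbit, so there is nothing to ``recognize'' them as --- the only surfaces known a priori to lie in $\overline{\cN}$ are those in the orbit of the triangle unfolding and its closure, and deciding membership in an orbit closure is precisely the problem being solved (for the quadrilateral unfoldings in particular, assuming they are ``already known to lie in $\overline{\cN}$'' is circular, since that is part of the conclusion of Theorem~\ref{thm:lowerBound}). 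The certification actually used is different: the Cylinder Deformation Theorem guarantees that the standard twist of each $\cN$-parallel \emph{equivalence class} is tangent, so it suffices to certify that the true partition into equivalence classes refines a given one, i.e.\ to prove that specific pairs of cylinders are \emph{not} $\cN$-parallel. This is done negatively --- e.g.\ because $\cN$-parallelism forces the ratio of circumferences to be locally constant on $\cN$, hence constant under deformation along the already-certified subspace $V_i$, which is why the algorithm of Section~\ref{sec:flatsurf} iterates $V_0\subset V_1\subset\cdots$ and why the variant that searches for cylinders on a \emph{deformed} surface is useful. Replacing your recognition step by this non-parallelism certification repairs the argument and brings it in line with what the paper (and \texttt{flatsurf}) actually does.
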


\begin{proof}
The proof is a rather straightforward computation with the flatsurf software suite that is described in Section~\ref{sec:flatsurf}. All the steps of the computation are explained in a Jupyter notebook available at \url{https://git.io/Ju2lU}. Here we only provide a sketch of the steps and leave all computational details to the software suite (or the determined reader).

First, starting from the unfolding of the triangle with angles $( \frac{3\pi}{20}, \frac{4\pi}{20}, \frac{13\pi}{20})$, we compute tangent vectors to the orbit closure, see Section~\ref{sec:flatsurf}. In this case, we find four tangent vectors. We hence obtain a four-dimensional subspace contained in $\overline{\cN}$. Next, we change our base point by following the available four-dimensional space and arrive at the surface presented in Figure~\ref{fig:layout}. The tangent space is then expressed in the new homology basis shown in Figure~\ref{fig:layout}.
\end{proof}

\begin{proof}[Proof of Theorem~\ref{thm:lowerBound}]
The lower bound on the dimension follows immedialtly from Theorem~\ref{thm:equations}. For the field of definition, it must be contained in the holonomy field of any surface contained in $\cN$. The four-dimensional subspace in Theorem~\ref{thm:equations} provides surfaces defined over $\bQ[\sqrt{5}]$. It is hence either $\bQ$ or $\bQ[\sqrt{5}]$.
\end{proof}

\section{Exploring $\GL_2(\bR)$-orbit closures with \texttt{flatsurf}}\label{sec:flatsurf}
Conjecture~\ref{conj:main} was made using by the \texttt{flatsurf} suite. It is partly
inspired by Alex Eskin's \texttt{polygon} program. This free software suite is maintained by Vincent
Delecroix and Julian Rüth. It consists of the C, C++ and Python libraries
\begin{itemize}
\item \texttt{e-antic}~\cite{e-antic} and \texttt{exact-real}~\cite{exact-real} for computations with exact real numbers,
\item \texttt{intervalxt}~\cite{intervalxt} to work with interval exchange transformations,
\item \texttt{flatsurf}~\cite{flatsurf}, \texttt{sage-flatsurf}~\cite{sage-flatsurf} for translation surfaces, \texttt{surface-dynamics}~\cite{surface-dynamics}, \texttt{veerer}~\cite{veerer},
\item \texttt{vue-flatsurf}~\cite{vue-flatsurf} and \texttt{ipyvue-flatsurf}~\cite{ipyvue-flatsurf} for visualization.
\end{itemize}
All C and C++ libraries have Python interfaces, mostly through
\texttt{cppyy}~\cite{cppyy} which provides automatic Python bindings for C++ libraries.

We now outline how this software suite rigorously gives ``lower bounds" for the size of orbit closures; a more in-depth
discussion will be in~\cite{flatsurf-article, boshernitzan-article}.
Concrete examples of such computations in Python are available in the
\texttt{sage-flatsurf} documentation at
\url{https://flatsurf.github.io/sage-flatsurf/}.

An invariant subvariety $\cM$ in a stratum of Abelian differentials is completely
determined by a translation surface $(X, \omega)$ and a vector subspace $V$
of $H^1(X, Z(\omega); \bR)$ where $Z(\omega)$ denotes the set of zeros of $\omega$. This vector subspace is
such that $V \oplus \sqrt{-1}\,V$
can be identified with the tangent space of $\cM$ at $(X, \omega)$ via the period map
$\omega \mapsto [\omega] \in H^1(X, Z(\omega); \bC)$.
Our approach to compute the orbit closure consists of building a sequence of
subspaces $V_0 \subset V_1 \subset \ldots$ of $V$. We start with the
\emph{tautological space} $V_0 = \bR \Re(\omega) \oplus \bR \Im(\omega)$.
By the $\GL_2(\bR)$-invariance
the two vectors $\Re(\omega)$ and $\Im(\omega)$ belong to $V$ and so $V_0\subset V$.
We then apply the following procedure to obtain $V_{i+1}$ from $V_i$.
\begin{enumerate}
\item Find a direction in $(X, \omega)$ that contains a cylinder
and apply the cylinder deformation theorem~\cite{Wcyl} to get tangent vectors
in the orbit closure.
\item Set $V_{i+1}$ to the sum of $V_i$ and the newly discovered tangent
vectors.
\end{enumerate}
\begin{figure}[h]
    \centering
    {
      \def\svgwidth{\columnwidth}
      \fontfamily{lmss}\fontsize{4}{4}\selectfont
      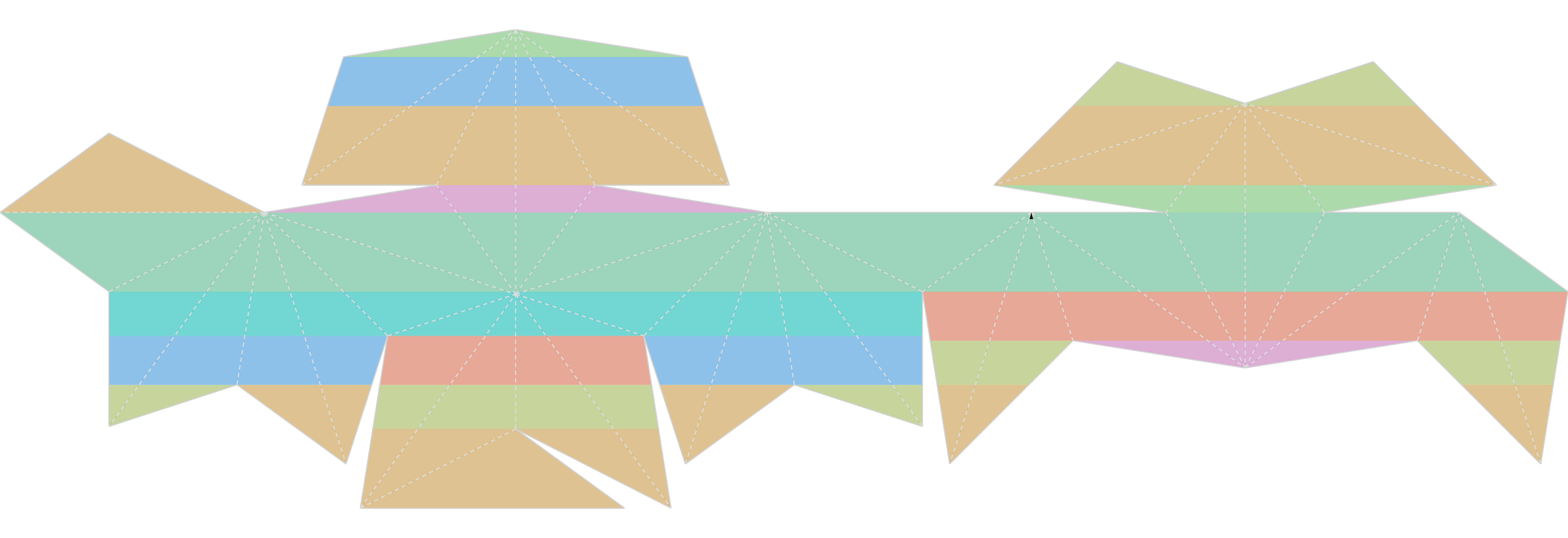
    }
    \caption{This unfolding of the triangle with angles $(\frac{3\pi}{20},
    \frac{4\pi}{20}, \frac{13\pi}{20})$ decomposes into 8 cylinders in
    horizontal direction. Application of the cylinder deformation theorem to this
    decomposition does, however, not produce any new tangent vectors that were
    not already in $V_0$.}
\end{figure}
Of course, there is an issue with this approach: the algorithm has
no stopping time, unless $V_n = H^1(X, Z(\omega); \bR)$ for some
$n$, in which case the orbit closure is dense. At each step,
it is guaranteed that $V_n \oplus \sqrt{-1}\, V_n$ is
a subspace of the tangent space but we have no way yet to automatically
certify that a candidate subspace is indeed the full tangent space.
An even more serious theoretical issue is the following question.
\begin{question} \label{Q:termination-no-deformation}
If we consider all directions that contain cylinders, do we ever get
$V_n=V$? That is, does the cylinder deformation theorem eventually discover the
entire tangent space?
\end{question}

P.~Apisa communicated to us some evidence that Question~\ref{Q:termination-no-deformation}
may have a negative answer. More specifically, during a summer school at ICERM,
 his students B.~Harper, H.~Wan, and H.~Yang discovered a specific
quadrilateral
whose
unfolding seems to have $\dim V_n \leq 3$ for all $n$ even though the $\GL_2(\bR)$-orbit
is known to be dense in its 6-dimensional stratum (so $\dim V=6$).

The following modification of the algorithm turns out to be useful in
practice to make the algorithm more efficient, and it may additionally circumvent a negative answer to Question~\ref{Q:termination-no-deformation} in some cases. At step $i$ of the algorithm,
instead at looking for cylinders in $(X, \omega)$ one can look for
cylinders in a deformation of $(X, \omega)$ along $V_i + \sqrt{-1} V_i$.

\begin{question} \label{Q:termination-deformation}
When deformations are performed in each step of the algorithm, do we ever get
$V_n = V$?
\end{question}

Compare Questions~\ref{Q:termination-no-deformation} and~\ref{Q:termination-deformation} to \cite[Question 5]{SWprobs},  \cite[Remark 6.1]{MirWri}, and \cite[Theorem 1.1]{LNW}, and note in particular that if there exists a completely parabolic surface that is not Veech, both questions will have a negative answer.

\section{Exceptional billiards}
\label{ssec:billiard:orbit:closures}
We now survey the various rational billiards whose unfoldings have non-dense
orbit closures. Most of them were already known in the literature. In addition to the
triangle with angles $( \frac{3\pi}{20}, \frac{4\pi}{20}, \frac{13\pi}{20})$,
the flatsurf suite allowed us to find two other exceptional
orbit closures. We conjecture that the finite list of exceptions in
Theorem~\ref{thm:exceptional:triangles} and Theorem~\ref{thm:exceptional:quadrilaterals}
is complete. Partial results in this direction are obtained in~\cite{MirWri2, LNZ, Apisa:triangles}.

We say that an $n$-tuple of positive integers $(a_1, \ldots,
a_n)$ is \emph{admissible} if
\begin{itemize}
\item $a_1 \leq a_2 \leq \ldots \leq a_n$,
\item $\gcd(a_1, \ldots, a_n) = 1$, and
\item $(n - 2) a_i \not= a_1 + \ldots + a_n$ for all $i \in \{1, \ldots, n\}$.
\end{itemize}
To the tuple $(a_1, \ldots, a_n)$ we associate the moduli space of flat metrics on
the sphere with conical singularities of angles $(a_1 \alpha, a_2 \alpha, \ldots, a_n \alpha)$ where $\alpha = \frac{(n-2)\,\pi}{a_1 + \ldots + a_n}$. Such locus of
flat metrics admits a cover of degree $a_1 + \ldots + a_n$ ramified over the
conical points which belongs to a stratum of Abelian or quadratic differentials
if $a_1 + \ldots + a_n$ is even or odd, respectively.

In this notation, this article considers the billiard in the
$(3, 4, 13)$ triangle and the $(2, 2, 3, 13)$ quadrilateral. Namely,
the $\GL_2(\bR)$-orbit closures of the unfoldings of these two
polygons is the invariant subvariety $\cN$ appearing in
Conjecture~\ref{conj:main} and Corollary~\ref{C:3.4.13}.

We say that an admissible tuple $(a_1, \ldots, a_n)$ is \emph{reducible} if
it satisfies one of the following conditions
\begin{itemize}
\item $n=3$ and $a_1 = a_2 < a_3$ or $a_1 < a_2 = a_3$, or
\item $n=4$ and $a_1 = a_2 < a_3 = a_4$.
\end{itemize}
The reducible
tuples correspond to triangles and quadrilaterals
that admit a reflection symmetry (independently of the side lengths).
In particular their unfoldings are double covers of unfoldings of their
``halves". For example
\begin{itemize}
\item the isosceles $(2, 2, 3)$-triangle unfolds to $\cH_3(2, 1^2)$ while its half, the right $(3, 4, 7)$-triangle unfolds to $\cQ_0(2, 1, -1^7)$,
\item the isosceles $(3, 3, 5)$-triangle unfolds to $\cH_5(4, 2^2)$ while its half, the right $(5, 6, 11)$-triangle unfolds to $\cQ_0(4, 3, -1^11)$,
\item the isosceles $(3, 3, 4)$-triangle unfolds to $\cQ_2(2, 1^2)$ while its half, the right $(2, 3, 5)$-triangle unfolds to $\cQ_0(1, -1^5)$.
\end{itemize}
An admissible tuple which is not reducible is called \emph{reduced}. An analysis using the \texttt{flatsurf} suite has rigorously proven the following.

\begin{theorem}
\label{thm:exceptional:triangles}
All admissible and reduced triples $(a,b,c)$ with $a + b + c \leq 58$ with
the exception of the list below correspond to triangular billiards whose unfoldings
have dense $\GL_2(\bR)$-orbit closures in their ambient strata.
\begin{itemize}
\item $\{(1, 2, 2k+1)\}_{k \geq 2}$, $\{(1, k, k+1)\}_{k \geq 3}$, $\{(2, 2k+1, 2k+3)\}_{k \geq 1}$: Teichmüller curves~\cite{V}, \cite{Vo}, \cite{W},
\item $(1, 4, 7)$, $(2, 3, 4)$, $(3, 4, 5)$, $(3, 5, 7)$: Teichmüller curves~\cite{Hooper}, \cite{KS}, \cite{V}, \cite{Vo},
\item $(1,3,6)$, $(1,3,8)$: rank-one but not Teichmüller curves (eigenform
loci inside a Prym subvariety)~\cite{AuAvDe},
\item $(1,4,11)$, $(1,4,15)$, $(3,4,13)$: rank 2 orbit closures~\cite{EMMW} and this article.
\end{itemize}
\end{theorem}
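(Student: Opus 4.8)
The plan is a systematic computer search. One enumerates every admissible reduced triple $(a,b,c)$ with $a+b+c \le 58$, sets aside the finitely many that are already known (or shown in Section~\ref{sec:explicit}) to have small orbit closure, and for each of the remaining triples runs the algorithm of Section~\ref{sec:flatsurf} until it produces a subspace $V_n$ of full dimension, which by the cylinder deformation theorem certifies that the orbit closure is the whole connected component of the ambient stratum through the unfolding.

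First one generates the candidate list: ``admissible'' and ``reduced'' are purely arithmetic conditions, so this is a short finite computation, and one cross-checks its output against the exceptions recorded in the literature (\cite{V,Vo,W,Hooper,KS,AuAvDe,EMMW}) together with $(3,4,13)$, verifying that none of the known exceptional triangles has been dropped. For each exceptional triple one records the reference that bounds its orbit closure --- a Teichm\"uller curve or a rank-one eigenform locus in the cited cases, and Theorem~\ref{thm:lowerBound} for $(3,4,13)$, where non-density is immediate from $\cN \subset \widetilde{\cQ}_4(11,1) \subsetneq \cH_8(12,2)$.

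Next, for each non-exceptional triple one builds the unfolding explicitly as a (half-)translation surface with coordinates in an explicit real number field (determined by $\cos(\pi/(a+b+c))$), using \texttt{e-antic} for exact arithmetic; when $a+b+c$ is odd the unfolding is a quadratic differential and one passes to its orientation double cover, so that the target dimension is that of the double-cover locus rather than of the ambient abelian stratum. One then iterates the step of Section~\ref{sec:flatsurf}: search for directions in which the surface, or a deformation of it along $V_i + \sqrt{-1}\,V_i$, decomposes completely into cylinders; certify periodicity rigorously with \texttt{intervalxt}; apply the cylinder deformation theorem~\cite{Wcyl} to read off new tangent vectors; and enlarge $V_i$. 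The run terminates successfully once $\dim V_n$ reaches the full value; by the local linearity of invariant subvarieties \cite{EM,EMM} a proper invariant subvariety through the unfolding would have strictly smaller tangent space, so the orbit closure must equal the connected component, and one checks by hand the finitely many strata in range whose relevant component is hyperelliptic or has a spin constraint, confirming that the unfolding lies in the expected component so that ``dense in the ambient stratum'' holds as stated.

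The main obstacle is that this last step carries no termination guarantee: as Questions~\ref{Q:termination-no-deformation} and~\ref{Q:termination-deformation} make explicit, the cylinder-deformation procedure --- even with deformations --- may stall below $\dim V$, and Apisa's students already found a quadrilateral where it apparently does. So the genuinely hard part is practical: for each remaining triple one must actually find enough completely periodic directions, in the surface and in its deformations, to fill out the tangent space, and one must carry out the exact cylinder-decomposition and periodicity certifications at the scale of these strata --- the genus grows with $a+b+c$, so one faces cohomology of dimension in the tens and surfaces with many cylinders --- within the capabilities of the \texttt{flatsurf} suite. A secondary, bookkeeping-flavored difficulty is making the enumeration fully auditable: pinning down the exact set of triples, the reference or computation disposing of each one, and the connected-component analysis, so that the claimed completeness of the exception list up to $a+b+c \le 58$ is justified beyond doubt.
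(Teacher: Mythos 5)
Your plan matches the paper's approach exactly: the paper offers no written proof beyond the assertion that ``an analysis using the \texttt{flatsurf} suite has rigorously proven'' the statement, and the intended argument is precisely the Section~\ref{sec:flatsurf} procedure you describe --- enumerate the admissible reduced triples, dispose of the listed exceptions by citation (and by Theorem~\ref{thm:lowerBound} for $(3,4,13)$), and for every other triple grow $V_0 \subset V_1 \subset \cdots$ via certified cylinder decompositions and the cylinder deformation theorem until full dimension forces density. You also correctly identify the real difficulty, namely that the procedure has no termination guarantee and the result rests on it actually succeeding for every triple in range.
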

The $(1,3,6)$ and $(1,3,8)$ triangles were discovered as part of the exhaustive search
of the \texttt{flatsurf} suite on small triangles.

In~\cite{EMMW} the authors exhibited 6 quadrilaterals with rank 2 orbit closures and this article
adds one to them. This gives the following list of exceptional quadrilaterals
\begin{equation} \label{eq:ExceptionalQuadrilaterals}
\begin{array}{l}
(1,1,1,7), (1,1,1,9), (1,1,2,8), (1,1,2,12) \\ (1,2,2,11), (1,2,2,15), (2,2,3,13). \\
\end{array}
\end{equation}
\begin{theorem}
\label{thm:exceptional:quadrilaterals}
All admissible and reduced quadruples $(a,b,c,d)$ with $a + b + c + d \leq 32$ with
the exception of the list~\eqref{eq:ExceptionalQuadrilaterals} correspond to billiards in quadrilaterals whose
unfoldings have dense $\GL_2(\bR)$-orbit closures in their ambient strata.
\end{theorem}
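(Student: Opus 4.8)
The plan is to reduce the theorem to finitely many explicit computations and to run, on each of them, the rigorous ``lower bound'' procedure of Section~\ref{sec:flatsurf}: for every admissible reduced quadruple with $a+b+c+d\le 32$ that is not on the list~\eqref{eq:ExceptionalQuadrilaterals}, the procedure is expected to terminate by producing $V_n$ equal to the \emph{entire} relative cohomology, i.e.\ the full tangent space to the ambient stratum. By~\cite{EM,EMM} this certifies that the orbit closure of the unfolding is all of that stratum, hence dense; in particular the unfolding is then automatically primitive, so nothing needs to be said separately about coverings.

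First I would enumerate the admissible reduced quadruples $(a,b,c,d)$ with $a+b+c+d\le 32$ and remove the seven tuples in~\eqref{eq:ExceptionalQuadrilaterals}. For each remaining tuple I would build the flat sphere with the prescribed cone angles, pass to the degree-$(a+b+c+d)$ cover ramified over the cone points, and present the resulting translation surface (when $a+b+c+d$ is even) or quadratic differential, via its orientation double cover (when $a+b+c+d$ is odd), as a gluing of polygons in \texttt{flatsurf}. The vertex coordinates lie in the real field $\bQ(\cos(\pi/N))$ with $N=a+b+c+d$, so all subsequent steps are carried out exactly in \texttt{e-antic}/\texttt{exact-real}, and each individual step of the algorithm is a finite exact computation and hence rigorous.

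Second, for each surface I would run the iterative procedure of Section~\ref{sec:flatsurf}: start from the tautological plane $V_0=\bR\,\Re(\omega)\oplus \bR\,\Im(\omega)$, repeatedly locate a periodic direction, compute its cylinder decomposition exactly, and enlarge $V_i$ by the tangent vectors produced by the cylinder deformation theorem~\cite{Wcyl}, using the base-point deformation variant discussed after Question~\ref{Q:termination-deformation} when it accelerates the process. The computation for a given tuple is declared successful once $V_n$ equals the full tangent space, at which point density follows. As a consistency check one verifies that, for the seven excepted tuples, the same procedure instead stabilizes at the known $4$-dimensional rank-$2$ loci of~\cite{EMMW} and of Conjecture~\ref{conj:main}, so that the exceptions genuinely behave differently; this is of course not needed for the statement of the theorem, which only asserts density off the list.

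The main obstacle is that the procedure has no a priori termination guarantee: as raised in Questions~\ref{Q:termination-no-deformation} and~\ref{Q:termination-deformation}, it is conceivable that for some non-exceptional tuple the cylinder deformations never exhaust the tangent space even though the orbit closure is dense, and then density could not be concluded this way. I expect this not to occur within the stated range, but dealing with any recalcitrant tuple would require a more exhaustive search over periodic directions, heavier use of the base-point deformation trick, or—in the worst case—an ad hoc argument (for instance exhibiting an explicit transverse surface, or a direct dynamical input) to close the remaining gap. A secondary, purely computational obstacle is the growth of the cost of exact arithmetic in $\bQ(\cos(\pi/N))$ as $N$ approaches $32$, which is precisely what dictates the bound $a+b+c+d\le 32$.
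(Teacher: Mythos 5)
Your proposal matches the paper's approach: the paper offers no written argument beyond the assertion that ``an analysis using the \texttt{flatsurf} suite has rigorously proven'' the result, which is exactly the finite enumeration plus the cylinder-deformation certification of Section~\ref{sec:flatsurf} that you describe, with density concluded once some $V_n$ equals all of $H^1(X, Z(\omega); \bR)$. You also correctly identify the only genuine issue, namely that the procedure has no a priori termination guarantee and one must simply observe that it does reach the full tangent space for every non-exceptional tuple in the stated range.
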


\bibliography{mybib}{}
\bibliographystyle{amsalpha}

\end{document}